\newtheorem{thm}{Theorem}
\theoremstyle{plain}
\newtheorem{lemma}{Lemma}
\newtheorem{remark}{Remark}
\begin{document}

\title{Supercoiled Tangles and Stick Numbers of 2-Bridge Links}
\author{Erik Insko}
\address{Dept. of Mathematics\\
Florida Gulf Coast University\\ Fort Myers, FL 33965}

\email{einsko@fgcu.edu}
\author{Rolland Trapp}
\address{Dept. of Mathematics\\
 5500 University Pkwy\\
San Bernardino, CA 92407}
\email{rtrapp@csusb.edu}

\begin{abstract}
Utilizing both twisting and writhing, we construct integral tangles with few sticks,  leading to an efficient method
for constructing polygonal 2-bridge links.  Let $L$ be a two bridge link with crossing number $c$, stick number $s$, and $n$ tangles. 
It is shown that $s \le \frac{2}{3}c + 2n+3$.  We also show that if $c > 12n+3$, then minimal stick representatives do not admit minimal crossing projections. 

\vspace{12pt}

\noindent
\textit{Keywords}:  2-bridge links, stick number, crossing number.

\noindent
Mathematics Subject Classification 2000: 57M25
\end{abstract}

\maketitle
\section{Introduction}

The stick number $s(L)$ of a link $L$ is the fewest number of line segments needed to represent $L$
in piecewise-linear fashion.  The crossing number $c(L)$ is the fewest number of crossings in any diagram
of $L$.  The exact stick and crossing numbers of links are difficult to ascertain in general.  The advent of knot polynomials, however, provided an effective tool for determining the crossing number of large classes of
links.  Knot polynomials were used to find the crossing number of alternating and adequate links 
\cite{ka, mu2,th,lt}, as well as that of torus links (see \cite{mu}).

Similarly, finding the precise stick number of classes of links is a difficult task.  This was accomplished for the family of  
torus links $L_{p,q}$, with $2 \le p \le q \le 2p$ by Jin in 1997 \cite{ji}
and extended to $2p < q \le 3p$ in 2013 by Johnson, Mills, and Trapp \cite{jmt}.  The basic idea was to construct the link using few sticks, then show the construction to be optimal
using the superbridge number.  The stick numbers of $L_{p,p+1}$ torus knots, together with their compositions, were determined by Adams, Brennan, Greilsheimer, and Woo in 1997 \cite{ab},
and those of $L_{p,2p+1}$ by Adams and Shayler in 2009 \cite{as}.  These results are obtained by again constructing the link efficiently, then using projections and total curvature 
(or bridge number) considerations to prove them optimal.  In both cases, the lower bounds come from geometric properties of torus links.
Thus the classes of links with known stick number are rare and enjoy specific geometric properties.

In addition to stick numbers of torus links, upper bounds for the stick numbers of 2-bridge links have been the object of some study.
In 1998, Furstenberg, Lie, and Schneider found that $s(L) \le c(L) +2$
for 2-bridge links with few tangles \cite{fls}. In the same year, McCabe found that $s(L) \le c(L)+3$ for arbitrary 2-bridge links, a considerable extension of 
results of Furstenberg et al. resulting from the addition of one stick \cite{mc}.  McCabe then conjectured
that her upper bound could be improved to $c(L) + 2$ and noted that her construction leads to minimal crossing projections.
McCabe's conjecture was proven by Huh, No, and Oh in 2011, and their construction again produces polygonal representatives that
admit minimal crossing projections \cite{hno}.  Both McCabe and Huh et al. construct $c$-crossing polygonal tangles
using $c+1$ sticks as in Figure \ref{oldtangle}. This approach arises naturally out of minimal crossing projections, and is reminiscent of DNA that is in $\lq\lq$linear form" as described by Brauer, Crick, and White \cite[p. 124]{bcw}. The picture of supercoiled DNA in Figure~\ref{dna} suggests an alternate construction.

\begin{figure}[h]
 \centering
 \includegraphics[height=1in]{./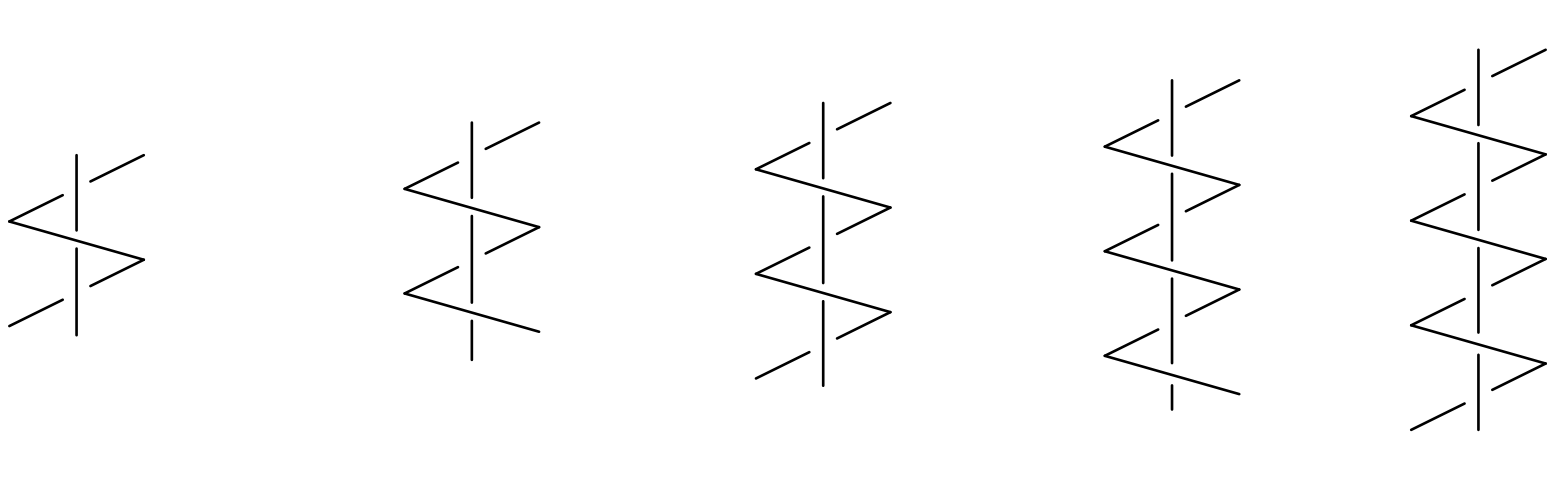}
 \caption{Existing construction of integral tangles} \label{oldtangle}
\end{figure}

Motivated by supercoiled DNA and the work of Johnson et al. for constructing polygonal cable links \cite{jmt}, we construct supercoiled integral tangles.  
In the supercoiled situation both twisting and writhing contribute to the crossing number of a tangle, in contrast with Figure~\ref{oldtangle} where all 
crossings result from twisting.  The result is a construction that uses roughly $\frac{2}{3}c$ sticks to build a $c$-crossing integral tangle, see Figure~\ref{create}.

\begin{figure}[h]
 \centering
 \includegraphics[height=1.5in]{./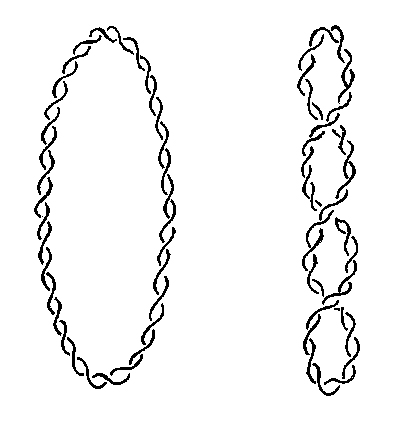}  \includegraphics[height=1.5in]{./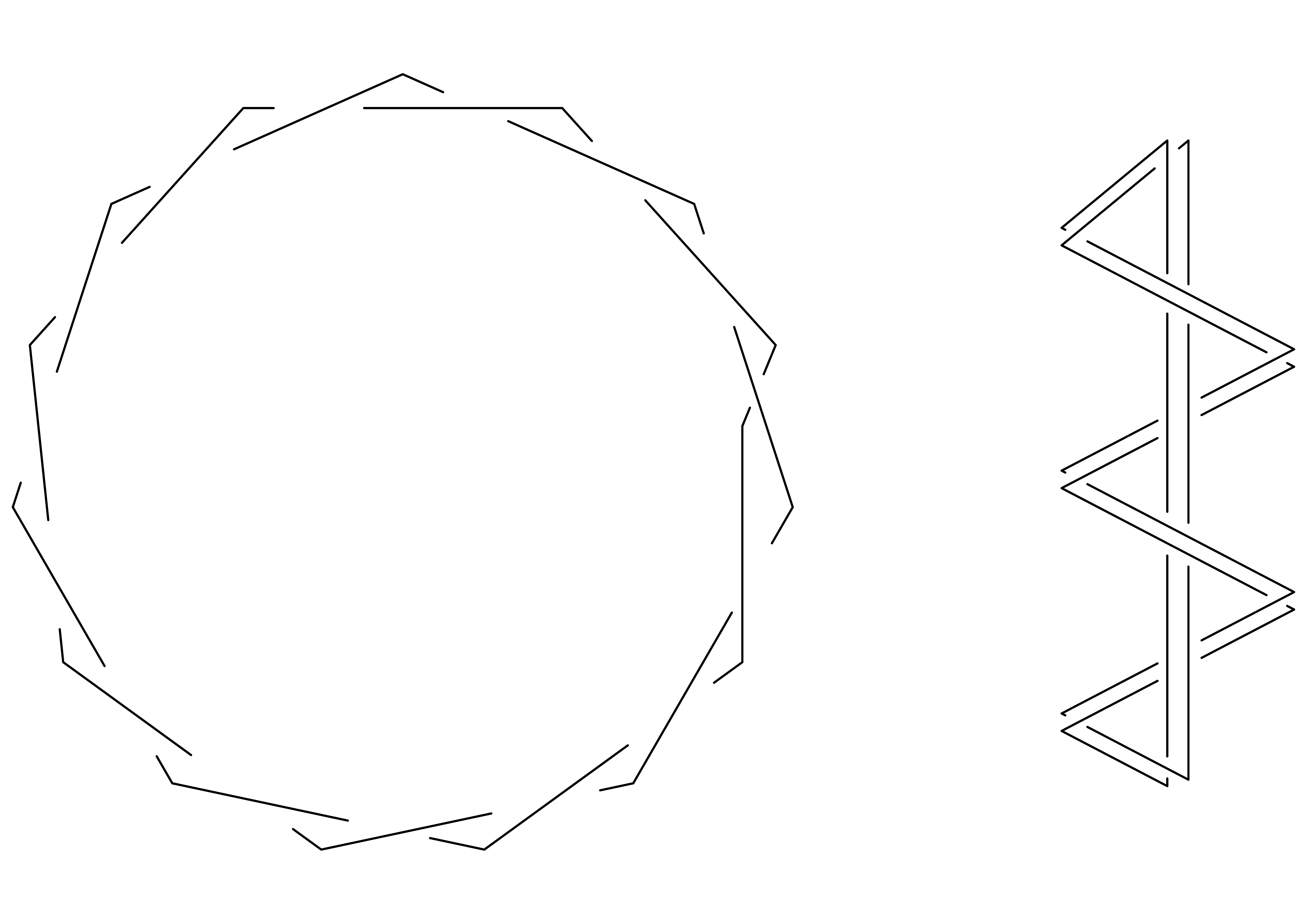} 
 \caption{Picture of supercoiling in DNA double helix and in the stick knot $T_{2,15}$, \cite{wi}.} \label{dna}
\end{figure}

These supercoiled tangles can be clipped together to construct polygonal rational links, which leads to improved upper bounds for stick numbers of 2-bridge  links in Section \ref{2bridgelinks}.  
The main result of the paper, Theorem \ref{thm:2BridgeUpperBound}, essentially shows that $s(L) \le \frac{2}{3}c(L) + 2n + 3$ for a 2-bridge link $L$ with $n$ tangles.
An application of the improved bound is given in Section \ref{crossings}.  Previous polygonal constructions of 2-bridge links admitted minimal crossing projections.  In contrast, we use
Theorem \ref{thm:2BridgeUpperBound} to
show that minimal stick representatives of 2-bridge links do not admit minimal crossing projections when
the crossing number is large relative to the number of tangles.

\section{Upper Bounds for 2-Bridge Links} \label{2bridgelinks}

In this section we describe a piecewise linear construction of supercoiled integral tangles and how to glue them together to build polygonal 2-bridge links.  
The result is an improved upper bound for the stick
number of 2-bridge links with crossing number roughly six times the number of tangles or more.  See 
Theorem \ref{thm:2BridgeUpperBound} for a precise statement.

In what follows we describe how to construct a supercoiled integral tangle with negative crossings. 
To construct one with positive crossings, simply take the mirror image of the ones constructed here.  
Start with a piecewise linear \emph{core} and label the $m+2$ edges 
$e_1, \ldots, e_{m+2}$.  
Figure~\ref{core} shows such a core viewed from two different directions.  We choose to index the
edges with $m+2$ since the first and last edges of the tangle have a free endpoint
(see Remark~\ref{remark1} after Lemma~\ref{lemma:IntegralTangle} as well).
Also, let $v_i$ denote the vertex at the intersection of $e_i$ and $e_{i+1}$.  
We will add two new vertices $o_i$ and $u_i$ next to each vertex $v_i$, and then we connect the $o_i$'s and $u_{i}$'s to create the polygonal strands of the tangle. 

\begin{figure}[h]
 \centering
 \includegraphics[height=2in]{./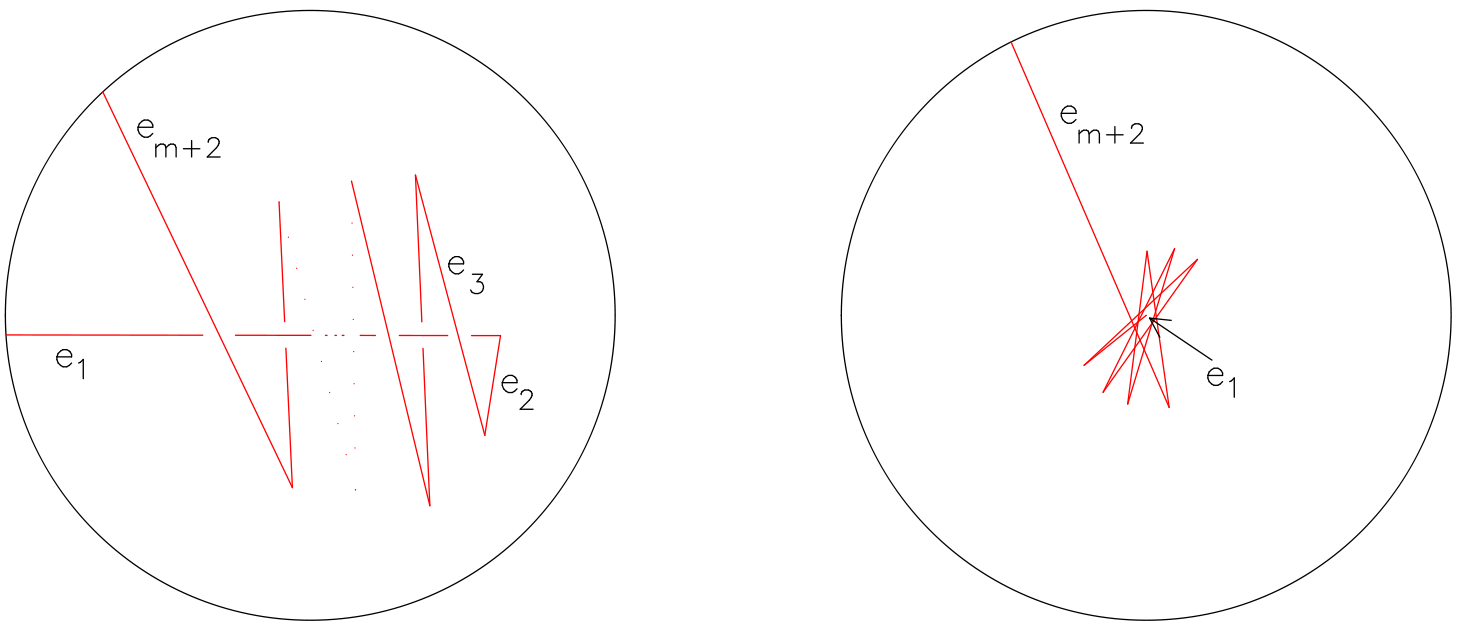}
 \caption{Core of an integral tangle with writhe $w=- m$} \label{core}
\end{figure}

The technique for placing the new vertices is now described, and is analogous to the construction
of polygonal 2-cables used by Johnson et al. \cite{jmt}.  Figure~\ref{PQ} summarizes the description.
The edges $e_i$ and $e_{i+1}$ of the core determine a plane $P$. 
Let $\ell$ be the line bisecting the acute angle 
defined by $e_i$ and $e_{i+1}$, and $k$ be the line perpendicular to $\ell$ at $v_i$ in $P$.  Let $o_i$ and $u_i$ be the two points on the line $k$ 
that are distance $\epsilon$ away from the vertex $v_i$.  Rotate the line $k$ slightly about the line $\ell$ so that it lies in a plane $Q$ with $o_i$ over, and $u_i$ under, $P$ in the projection 
direction $z$ (see Figure~\ref{PQ}).  This determines the placement of the new vertices.

\begin{figure}[h]
 \centering
 \includegraphics[height=2 in]{./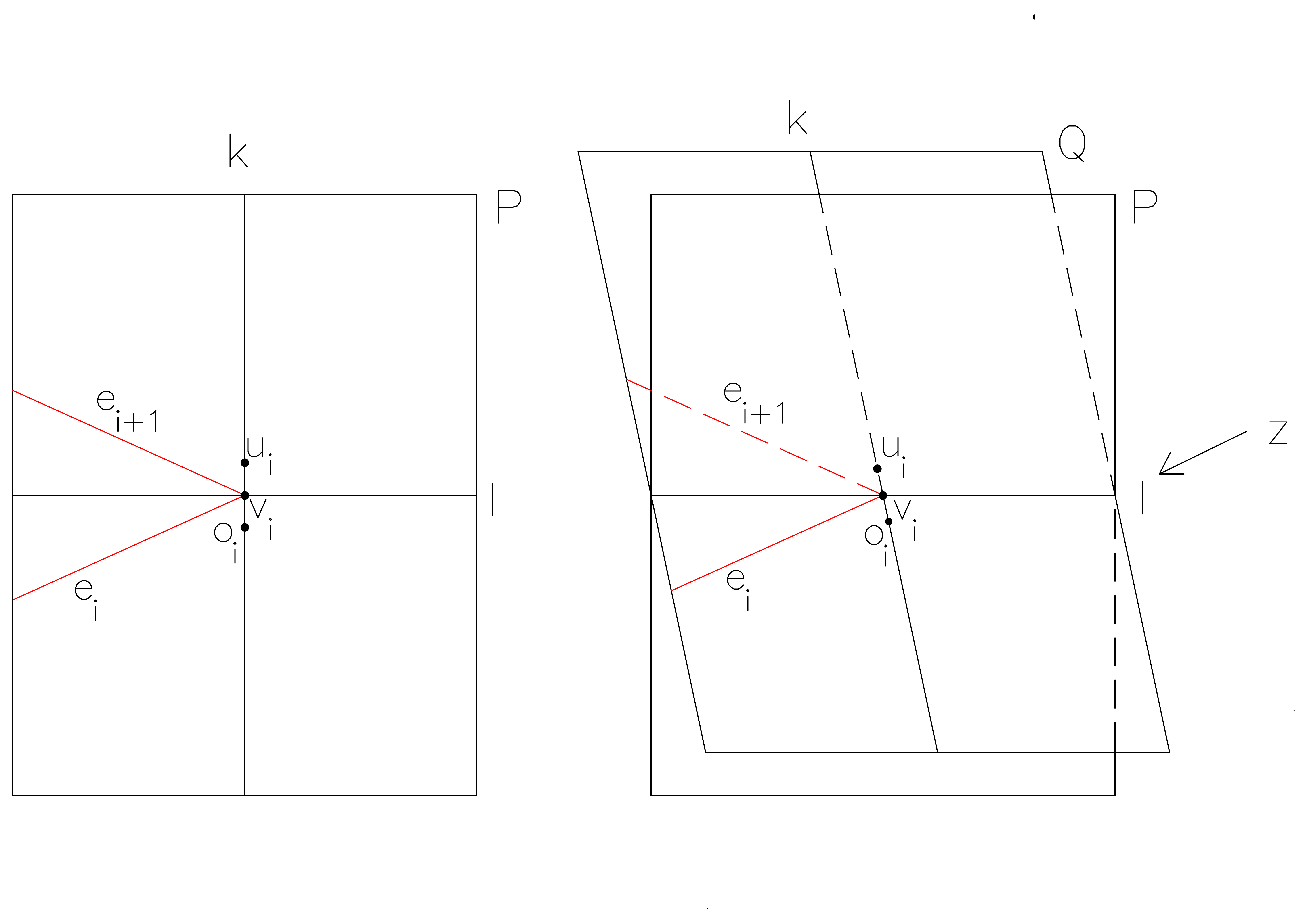}
 \caption{Adding the vertices $o_i$ and $u_i$} \label{PQ}
\end{figure}

Now that the vertices have been placed, create the edges of the tangle by connecting $o_i$ to $u_{i+1}$ and $u_i$ to $o_{i+1}$.  We remark that the distance $\epsilon$ from $v_i$ to the new vertices should be
chosen small enough that the entire polygonal tangle lies inside an embedded tube around the core.  This
choice of $\epsilon$, together with the relative positions of planes $P$ and $Q$, imply that the tangle 
appears as in the first diagram of Figure~\ref{create}.  As in that figure, 
this piecewise linear tangle can be isotoped to a smooth integral tangle.

\begin{figure}[h]
 \centering
 \includegraphics[height=1.5 in]{./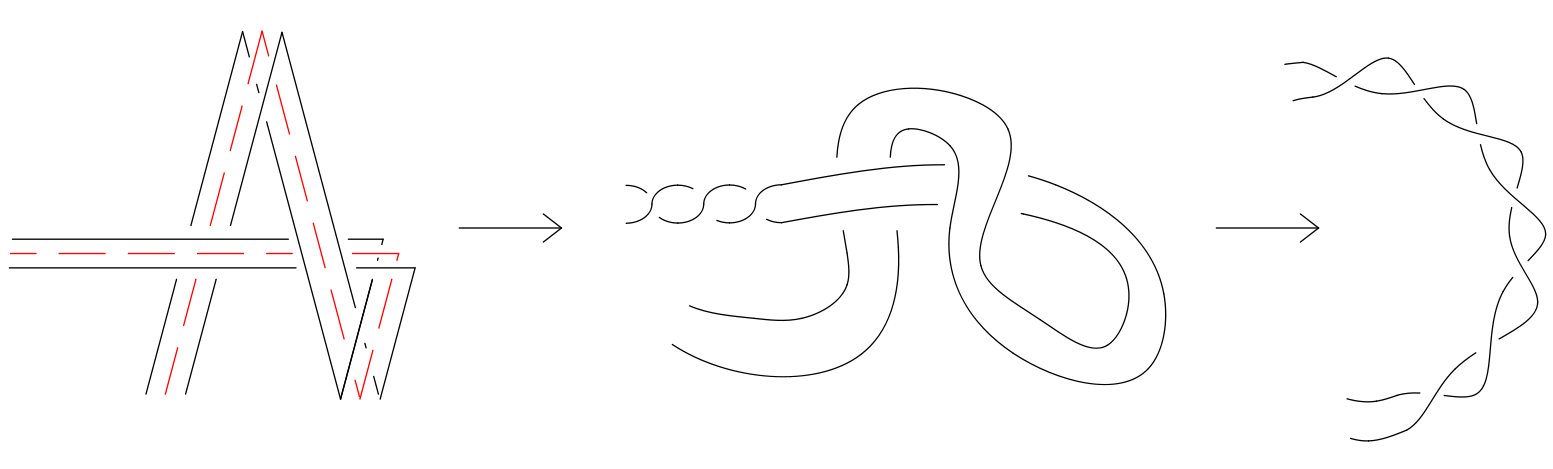}
 \caption{Isotopy to smooth tangle} \label{create}
\end{figure}

\begin{lemma}\label{lemma:IntegralTangle}
An integral tangle with $c$ crossings can be made using $s$ sticks, where 
\[
s =  \begin{cases}
\frac{2}{3}c+4& c\equiv 0\pmod 3\\ 
\frac{2}{3}c+\frac{10}{3}& c\equiv 1\pmod 3\\
\frac{2}{3}c+\frac{11}{3}& c\equiv 2\pmod 3
\end{cases}
\]
\end{lemma}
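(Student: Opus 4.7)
The plan is to verify the stick count, determine the integer type (and hence the minimal-crossing count) of the resulting tangle via a writhe-plus-twist computation, and then handle the three residues modulo $3$ by small modifications of the construction. Counting sticks first, the core with $m+2$ edges gives $m+1$ interior vertices $v_1,\dots,v_{m+1}$, at each of which two vertices $o_i$ and $u_i$ are placed; these are joined by the $2m$ diagonal edges $o_iu_{i+1}$, $u_io_{i+1}$ for $i=1,\dots,m$, together with four endpoint edges extending from $o_1,u_1,o_{m+1},u_{m+1}$ along $e_1$ and $e_{m+2}$, for $s = 2m+4$ sticks.

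Next, I would argue that the polygonal projection contains $m$ twist crossings between the two strands $A$ and $B$ (one per interval), plus four crossings at each of the $m$ self-crossings of the core: one $AA$ self-crossing, one $BB$ self-crossing, and two $AB$ crossings. The $AA$ and $BB$ self-crossings disappear under the isotopy of Figure~\ref{create}, since each tangle strand is unknotted, leaving an integral tangle whose crossing number equals the algebraic $AB$ intersection count $m + 2m = 3m$. Hence the construction realizes $c = 3m$ crossings using $s = \tfrac{2c}{3}+4$ sticks, settling the case $c \equiv 0 \pmod 3$. For $c = 3m+1$, I would reroute one endpoint edge (for instance the edge issuing from $o_{m+1}$) so that it picks up one extra $AB$ crossing without increasing the stick count, giving $s = 2m+4 = \tfrac{2c+10}{3}$. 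For $c = 3m+2$, I would introduce one additional stick---say a short subdivision at an endpoint edge---arranged to produce two extra $AB$ crossings, giving $s = 2m+5 = \tfrac{2c+11}{3}$.

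The principal obstacle lies in the integer-type computation, namely showing that each core self-crossing contributes precisely two $AB$ crossings to the smooth integral tangle rather than four, as a naive count of the polygonal cable diagram might suggest. Establishing this cleanly requires tracking both strands through the writhing core and invoking the standard framing/writhe relation for $2$-cables, in the spirit of the analysis in \cite{jmt}. Some additional care is needed in the modular modifications to ensure that the perturbed endpoint edges do not introduce spurious crossings elsewhere in the diagram.
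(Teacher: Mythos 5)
Your construction is the same as the paper's --- a polygonal $2$-cable of a writhing core with $m+2$ edges, $m+1$ internal vertices, and $m$ self-crossings, built by placing $o_i$ over and $u_i$ under each core vertex and connecting $o_i$ to $u_{i+1}$ and $u_i$ to $o_{i+1}$ --- and your stick count $s=2m+4$ agrees with the paper's. Your accounting of the writhe contribution is also a legitimate elaboration of what the paper asserts directly: each core self-crossing produces four crossings in the raw polygonal diagram, of which one is a self-crossing of each strand (removable, since each strand of a $2$-cable of an unknotted arc is boundary-parallel) and two are genuine inter-strand crossings; this is exactly the ``writhe converts to a full twist'' statement the paper reads off from Figure~\ref{create}, and your appeal to the framing/writhe relation for $2$-cables is the right way to make it rigorous.

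The genuine gap is an off-by-one in the twist count, and it propagates into your case analysis. You count one twist crossing per interval between consecutive core vertices, giving $m$; the paper counts one half-twist crossing at each of the $m+1$ internal vertices (the base case $m=0$ is the clearest test: two core edges, one vertex, four sticks, and the result is the $1$-crossing integral tangle, not the trivial tangle). So the unmodified construction realizes $c=3m+1$, i.e.\ the residue class $c\equiv 1\pmod 3$ is the base case, not $c\equiv 0$. Consequently your modifications run in the wrong direction: you propose to \emph{gain} one crossing for free by rerouting an endpoint edge, and two crossings for one extra stick. Neither is justified --- a straight endpoint segment cannot simply be made to pick up an extra crossing without adding a vertex or disturbing the rest of the tangle --- and if the free extra crossing were available it would yield $c=3m+2$ with $2m+4$ sticks, i.e.\ $s=\tfrac{2}{3}c+\tfrac{8}{3}$, strictly beating the lemma, which should have been a warning sign. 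The paper instead \emph{removes} a crossing for free (reconnect $o_1$ to $o_2$ and $u_1$ to $u_2$, suppressing one half-twist, to get $c\equiv 0\pmod 3$ with the same $2m+4$ sticks) and pays one additional stick to \emph{add} a single crossing (the auxiliary vertex $w$ of Figure~\ref{core1}, giving $c=3m+2$ with $2m+5$ sticks). You need these explicit local modifications, or equivalents, for the $c\equiv 0$ and $c\equiv 2$ cases; as written those two cases are asserted rather than proved.
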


\begin{proof}
We will treat the case where $c \equiv 1 \pmod 3$ first.  (This is the case where $o_i$ is connected to $u_{i+1}$ and $u_i$ is connected to $o_{i+1}$ for all $i$.)  The behavior of the new edges
near the vertices and self-crossings of the core allow us to count the number of crossings in
the constructed integral tangle.
The core has $m+2$ edges yielding $m+1$ internal vertices $v_1, \ldots, v_{m+1}$ and $m$ self-crossings.  
The half-twist at
each internal vertex $v_i$ of the core contributes one crossing to the integral tangle.  Further, each self-crossing of the core yields two sets of parallel strands in the tangle that cross each other.
Such a conformation contributes a full twist, or two crossings, to the integral tangle (as in Figure~\ref{create}).  
Thus in the tangle there are a total of $c = (m+1) + 2m =  3m+1$ crossings. Since each of the $m+2$ edges of the core corresponds to $2$ sticks of the integral tangle, the stick number of the tangle is $s = 2m+4$.
These equations show that $s = \frac{2}{3} c+ \frac{10}{3}$ in the case $c \equiv 1 \pmod 3$.

\begin{figure}[h]
 \centering
 \includegraphics[width=5 in]{./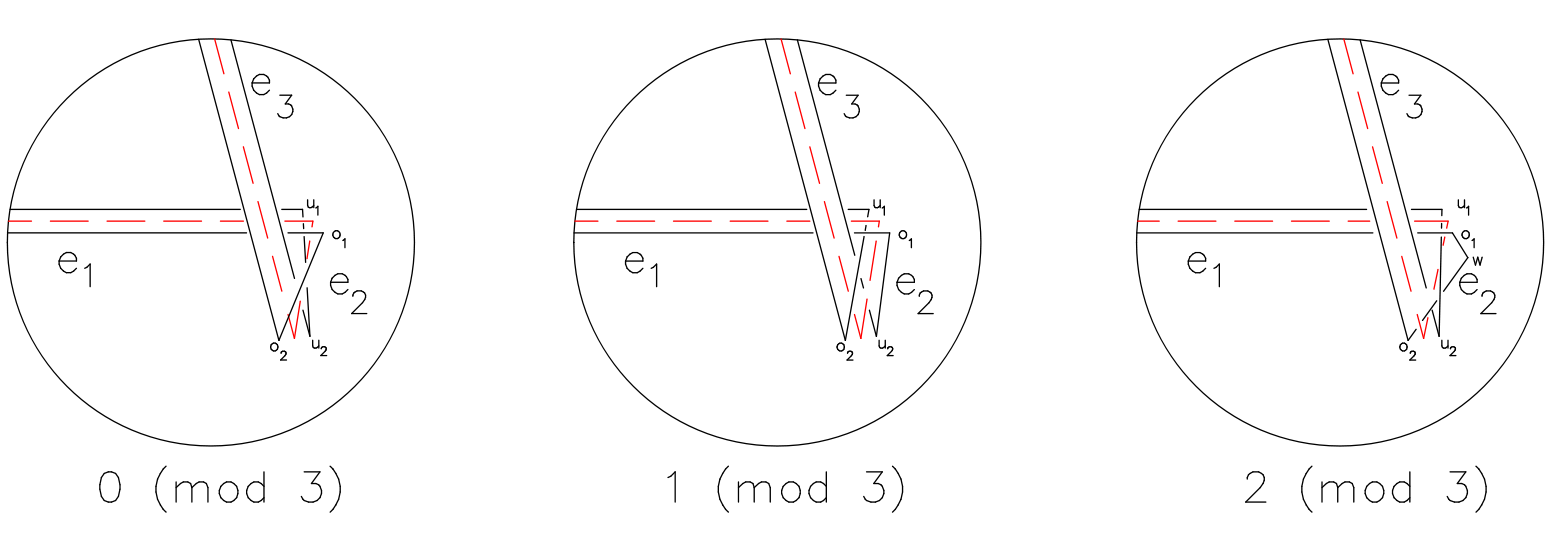}
 \caption{Base Constructions} \label{core1}
\end{figure}

To construct a tangle with crossing number $c \equiv 0 \pmod 3$, we connect $o_1$ to $o_2$ and $u_1$ to $u_2$ as in Figure~\ref{core1}, and leave the remaining connections the same.  
This uses the same number of sticks as the previous case, but reduces
the crossing number of the tangle by one.  Reasoning as above gives the desired result, $s = \frac{2}{3}c + 4$. 

To construct a tangle with crossing number $c \equiv 2 \pmod 3$, we add a new vertex $w$ that lies below $u_2$ and $o_1$.  
The vertex $w$ should be chosen so that the edge from it to 
$o_2$ passes between the two edges adjacent to $u_2$ as in the third image of 
Figure~\ref{core1}.
Connect $o_1$ to $w$, $w$ to $o_2$, and $u_1$ to $u_2$, and leave the remaining connections the same.
This uses one more stick than in the $c \equiv 0 \pmod 3$ case, but
yields $3m+2$ crossings. Thus $s = \frac{2}{3} c+ \frac{11}{3}$ in this case. 
\end{proof}

\begin{remark} \label{remark1}
Note from the construction in the proof that a tangle with $c$ crossings comes from a polygonal core with $m+2$ edges where $m = \left \lfloor \frac{c}{3} \right \rfloor$.
\end{remark} 

Using the construction of supercoiled integral tangles in Lemma \ref{lemma:IntegralTangle}, we will describe a method of efficiently constructing polygonal $2$-bridge links.   Any 2-bridge knot or link has a reduced
alternating diagram as in Figure~\ref{conway}, which we call a \emph{standard diagram}. Further, the number of tangles $n$ can be chosen to be an odd integer \cite{bz}.
Since the projection is reduced and alternating, it has minimal crossing number \cite{ka, mu2, th}.

\begin{figure}[h]
 \centering
 \includegraphics[height=2in]{./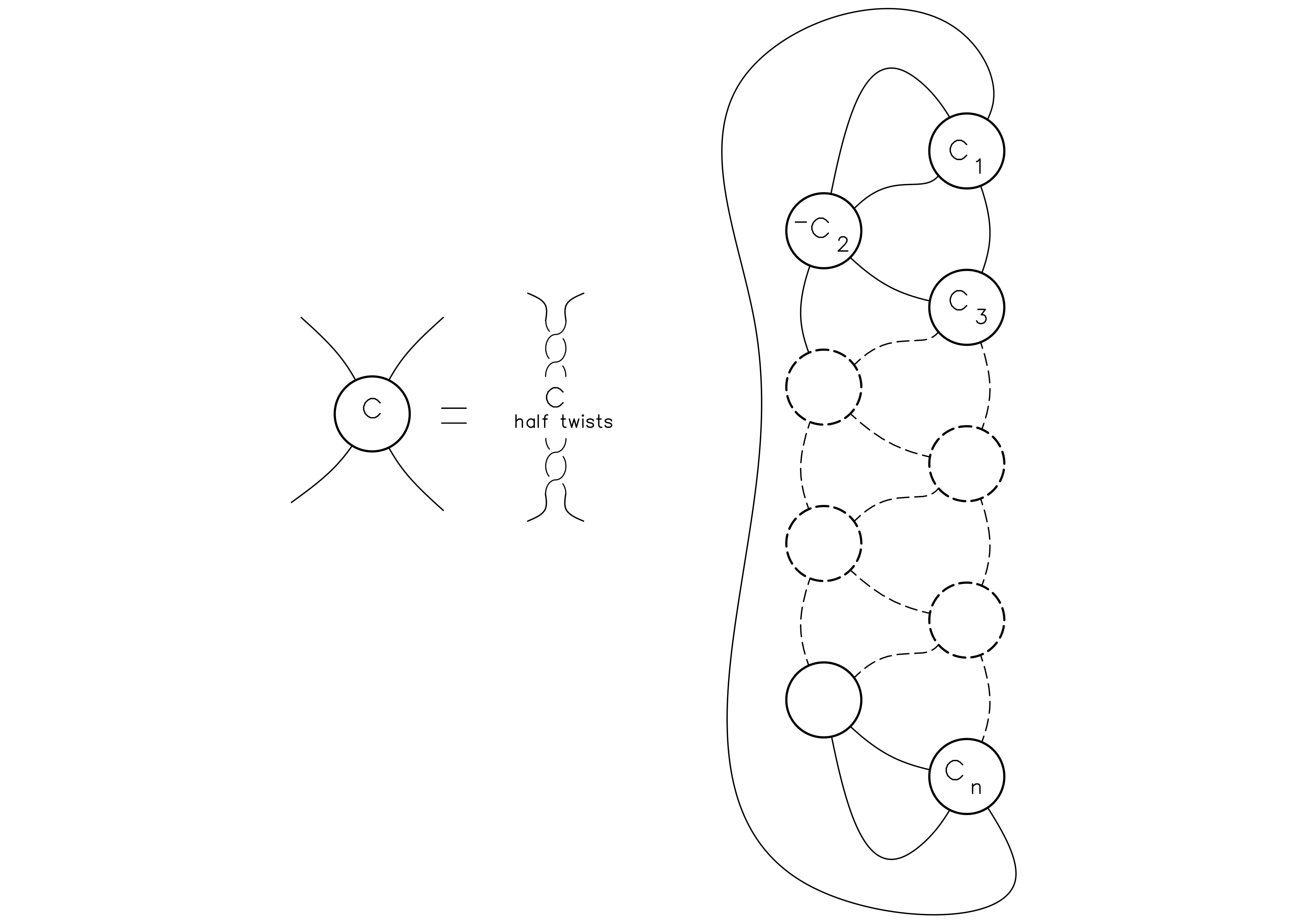}
 \caption{Standard diagram of a 2-bridge link} \label{conway}
\end{figure}

We now construct polygonal representatives of 2-bridge links.  
To do so we will first turn a standard diagram into a linear planar graph with $2n+3$ edges, and then
insert supercoiled tangles to result in the desired 2-bridge link. 

\begin{figure}[h]
 \centering
 \includegraphics[height=3 in]{./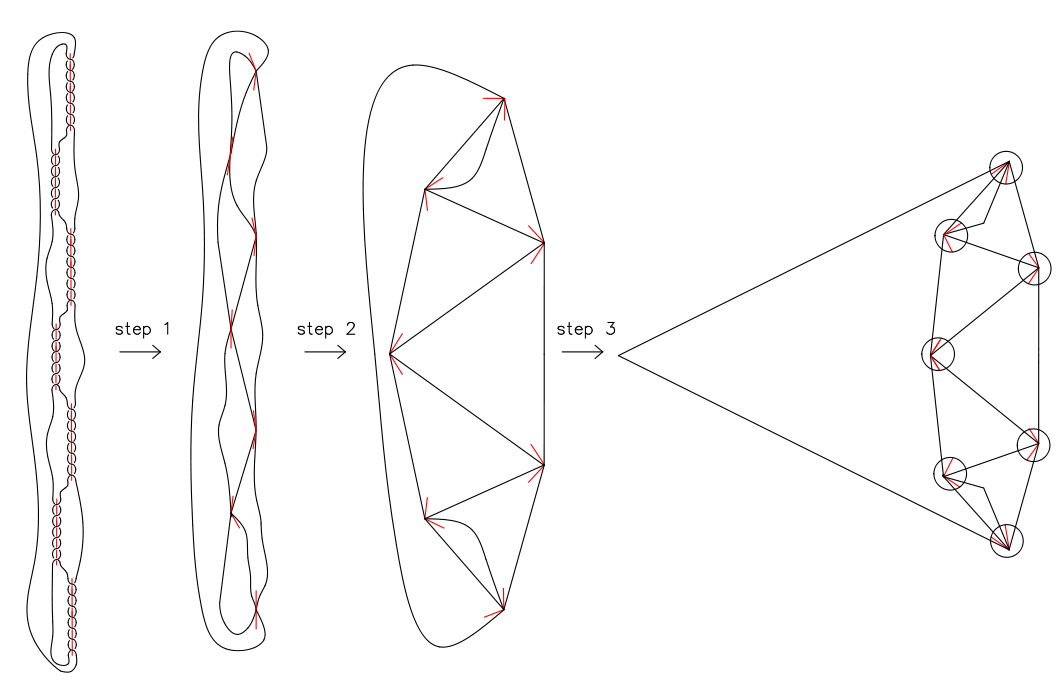}
 \caption{Polygonal embedding of a standard 2-bridge diagram} \label{figure:algorithm}
\end{figure}

Start with the standard diagram of a 2-bridge link in Figure~\ref{conway}, and shrink each integral tangle 
to a vertex.  The regions around each vertex are decorated with a core to indicate which direction the 
integral tangle should be inserted.
The result is a 4-valent planar graph decorated with a core extending out of each vertex in two of its four surrounding regions (see Step 1 of Figure~\ref{figure:algorithm}).  

Isotope the vertices corresponding to negative tangles to the left and the ones corresponding to positive tangles to the right, so that the vertices all lie on a convex curve in the plane.
As in Step 2 of Figure~\ref{figure:algorithm}, all but three of the edges can be isotoped to be line segments. The remaining three edges can be represented by two line segments each as in the final diagram of
 Figure~\ref{figure:algorithm}.
The result is a polygonal embedding of the $4$-valent graph with $2n+3$ sticks. 

Now that we have a decorated, planar $4$-valent graph, we set about the task of inserting the supercoiled tangles.  
We will describe the insertion for a positive integral tangle, and the mirror image gives the corresponding construction for negative integral tangles.

Place a 3-ball around each vertex. Inside each 3-ball the graph looks like the first image in Figure~\ref{break}.
Replace the single vertex with two vertices stacked directly on top of each other,
 so that the two northern edges and core stay attached to the bottom vertex and the two southern strands and core stay attached to the top vertex. 
We then shift the vertices horizontally with respect to the projection direction so that cores and edges cross each other as in the second step of Figure~\ref{break}.

\begin{figure}[h]
 \centering
 \includegraphics[width=4.5in]{./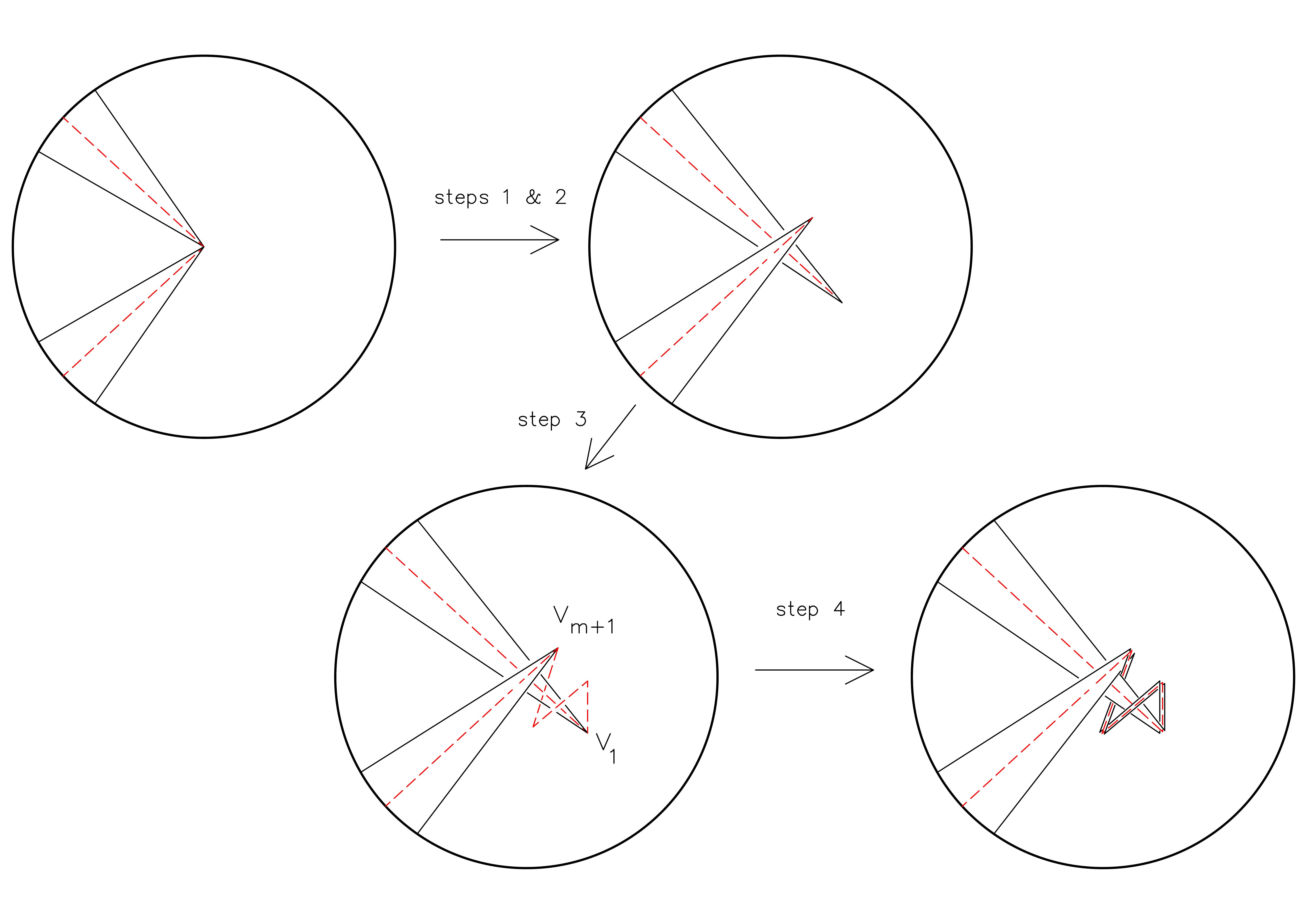}
\caption{Inserting integral tangles}  \label{break}
\end{figure}

Perform the previous two steps on all vertices concurrently.    These steps can be performed in such a way that they do not alter the topology of the graph outside the 3-balls.  

Recall from Remark \ref{remark1} that constructing a tangle with $c$ crossings requires a core with $m+2$ edges where $m = \left \lfloor \frac{c}{3} \right \rfloor$.
We already have two edges of the core inside the 3-ball, and must add $m$ edges to complete the core. 

Label the bottom vertex $v_1$ and the top vertex $v_{m+1}$.
Add a polygonal core that starts at the vertex $v_1$, writhes around the strands and the core that are adjacent to the vertex $v_1$
a total of $m-1$ times and connects to the other vertex $v_{m+1}$ as in the third step of Figure~\ref{break}.
This core will act as the core of the supercoiled tangle in the construction decribed prior to Lemma~\ref{lemma:IntegralTangle}.
For $1 < i < m+1$ place the $o_i$ and $u_i$ as in that construction.  

The placement of the vertices $o_1, u_1, o_{m+1}, u_{m+1}$ requires some extra care.
Attach edges from $o_2$ and $u_2$ to the vertex $v_1$, creating a double point in the tangle. 
One of the two possible $\epsilon$-resolutions of this double point results in desired crossings 
in the diagram.  Label the appropriate points of that resolution $o_1$ and $u_1$ as in Figure~\ref{3d}.
Use the same technique for placing $o_{m+1}$ and $u_{m+1}$.  We may choose $\epsilon$ small enough so that the topology outside the $3$-balls is unchanged.
When $c \equiv 0$ or $1 \pmod 3$ we are done.  If $c \equiv 2 \pmod 3$ an extra stick must be added as in the proof of Lemma \ref{lemma:IntegralTangle}.
Applying this construction inside each 3-ball results in a polygonal link that is isotopic to our original $2$-bridge link.

\begin{figure}
 \centering
 \includegraphics[width=4in]{./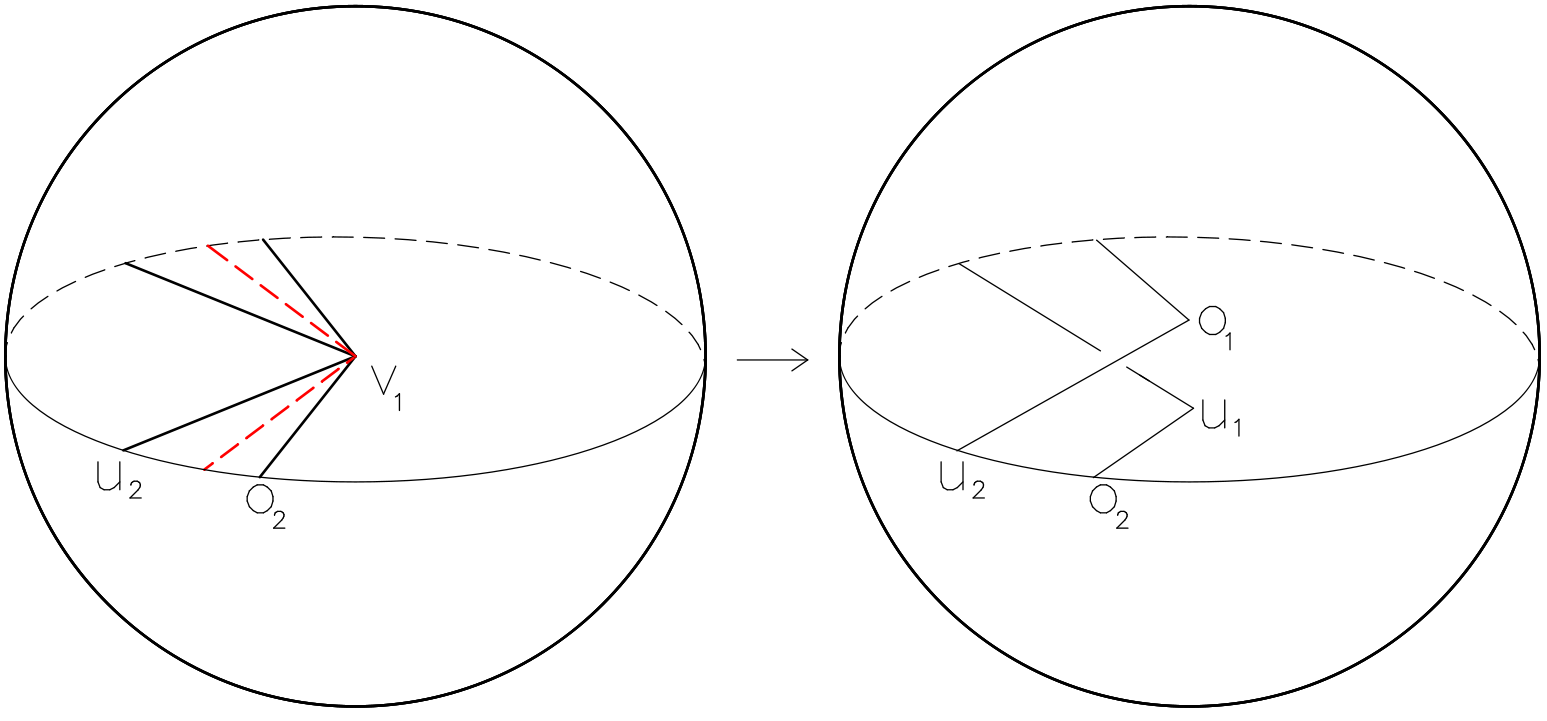}
 \caption{$\epsilon$-Resolution of a double point in tangle} \label{3d}
\end{figure}

\begin{thm}\label{thm:2BridgeUpperBound}
Let $L$ be a rational link given by the integers $c_1, c_2, \dots,c_n$, and suppose $P$ of the integers satisfy $c_i \equiv 0 \pmod 3$, $Q$ satisfy $c_i \equiv 1 \pmod 3$, and $R$ satisfy $c_i \equiv 2\pmod 3$.  Then the stick number $s(L)$ of $L$ satisfies

\[
s(L) \le \frac{2}{3}c(L) + 2n + 3 - \frac{2}{3}Q - \frac{1}{3}R.
\]

\end{thm}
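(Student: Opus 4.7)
The plan is to construct a polygonal representative of $L$ by running the procedure described in the paragraphs preceding the theorem, and then to count sticks using Lemma~\ref{lemma:IntegralTangle}. First, I polygonalize the decorated 4-valent planar skeleton (obtained by shrinking each tangle to a vertex), which uses $2n+3$ sticks as explained above. Next, at each vertex of the skeleton I insert the appropriate supercoiled integral tangle via the local recipe, including the $\epsilon$-resolution at each tangle's endpoints shown in Figure~\ref{3d}.

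By Lemma~\ref{lemma:IntegralTangle}, the $i$-th tangle requires $s_i$ sticks, where $s_i$ equals $\tfrac{2}{3}c_i + 4$, $\tfrac{2}{3}c_i + \tfrac{10}{3}$, or $\tfrac{2}{3}c_i + \tfrac{11}{3}$ according as $c_i \equiv 0, 1$, or $2 \pmod 3$. Summing over the $n$ tangles and using $P+Q+R=n$ yields
\[
\sum_{i=1}^n s_i \;=\; \tfrac{2}{3}c(L) + 4P + \tfrac{10}{3}Q + \tfrac{11}{3}R \;=\; \tfrac{2}{3}c(L) + 4n - \tfrac{2}{3}Q - \tfrac{1}{3}R.
\]
The key observation is that each tangle's four \emph{boundary} strand sticks---the two parallel to the first core edge $e_1$ and the two parallel to the last core edge $e_{m+2}$---are identified with the portions of the external strand edges already drawn in the 4-valent skeleton. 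Thus each tangle contributes only $s_i - 4$ genuinely new sticks beyond the $2n+3$ skeletal ones, and so
\[
s(L) \;\le\; (2n+3) + \sum_{i=1}^n (s_i - 4) \;=\; \tfrac{2}{3}c(L) + 2n + 3 - \tfrac{2}{3}Q - \tfrac{1}{3}R,
\]
which is the claimed bound.

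The main obstacle is making the boundary-absorption step rigorous: one must verify that the $\epsilon$-resolutions at each tangle's two endpoints genuinely identify the four boundary strand sticks of the tangle with sticks already present in the skeletal embedding, producing no extraneous sticks and preserving the local crossing count of $c_i$. Once this geometric bookkeeping is confirmed, the stated bound follows by the arithmetic above.
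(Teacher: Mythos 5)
Your proposal is correct and follows essentially the same route as the paper: build the $4$-valent skeleton with $2n+3$ sticks, insert the supercoiled tangles of Lemma~\ref{lemma:IntegralTangle}, observe that each tangle shares four sticks with the skeleton so it contributes only $s_i-4$ new edges, and sum. The arithmetic matches the paper's term-by-term count $\sum_{c_i\equiv_3 0}\tfrac{2}{3}c_i+\sum_{c_j\equiv_3 1}\bigl(\tfrac{2}{3}c_j-\tfrac{2}{3}\bigr)+\sum_{c_k\equiv_3 2}\bigl(\tfrac{2}{3}c_k-\tfrac{1}{3}\bigr)$ exactly, and the ``boundary-absorption'' issue you flag is precisely what the paper's $\epsilon$-resolution construction preceding the theorem is designed to handle.
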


\noindent
\begin{proof}  
The portion of the graph outside of the $3$-balls consists of $2n+3$ sticks. 
In each $3$-ball, a supercoiled tangle uses four sticks from the outside $2n+3$ sticks.  
Hence each of these tangles uses four less sticks than the stick numbers
stipulated by Lemma \ref{lemma:IntegralTangle}.
The construction yields the upper bound

\begin{align}
s(L) &\le 2n + 3 + \sum_{c_i \equiv_3 0}\frac{2}{3}c_i + \sum_{c_j \equiv_3 1  }\left(\frac{2}{3}c_j-\frac{2}{3}\right) + \sum_{c_k \equiv_3 2 }\left( \frac{2}{3}c_k-\frac{1}{3}\right) \nonumber\\
&= \sum_{i=1}^n\frac{2}{3}c_i + 2n + 3 - \frac{2}{3}Q - \frac{1}{3}R\nonumber\\
&=\frac{2}{3}c(L) + 2n + 3 - \frac{2}{3}Q - \frac{1}{3}R.\nonumber
\end{align} \end{proof}

We make a few observations to put this result in context.  This improves on the previous upper bound 
as long as the upper bound of Theorem~\ref{thm:2BridgeUpperBound} is less than $c(L) + 2$.  Solving
that inequality for crossing number, we see that Theorem~\ref{thm:2BridgeUpperBound} is an improvement for

\[
6n + 1 - 2Q - R < c(L).
\]
Intuitively, if there are at least six times as many crossings as tangles, the supercoiling technique is preferable.  

In particular, if each twist has more than seven crossings supercoiling provides a better construction.  It is
interesting to note that similar bounds on crossings in twists appear when considering hyperbolic structures 
on knots. Purcell proves that any knot or
link with at least 6 crossings in each twist region is hyperbolic if its augmented link is. Moreover,
she obtains twist number bounds on hyperbolic volume when each twist region has at least seven 
crossings (see \cite{pu}).  One wonders if there is a connection between hyperbolic volumes, supercoiling, 
and stick number.

Secondly, let us interpret Theorem~\ref{thm:2BridgeUpperBound} in the case where the number of tangles is 
fixed but the crossing number increases without bound.  For fixed $n$ and large $c$, Theorem~\ref{thm:2BridgeUpperBound} says that the stick number is asymptotically at worst two-thirds
the crossing number.  Previous upper bounds predicted that stick number is at worst asymptotic to the crossing number.
An obvious question is whether or not the two-thirds bound is optimal.

\section{Minimal Crossing Projections} \label{crossings}

Both McCabe and Huh et al. point out that their constructions produce polygonal links which admit minimal crossing projections \cite{hno, mc}. 
 In contrast, in this section we show that minimal stick representatives of 2-bridge links do not admit minimal crossing projections in general.

Minimal stick representatives of $(2,n)$ torus links do not admit minimal crossing projections for $n \ge 15$ \cite[Corollary 2]{jmt}.  
The main observation used is that straightening the boundary of a bigon in a diagram into sticks requires introducing a vertex.  We formalize this in a lemma.

\begin{lemma}\label{lemma:BigonsAndSticks}
Let $D$ be a projection of the link $L$ with $b$ bigons.  Any polygonal representative of $L$ admitting a projection planar isotopic to $D$ has at least $b$ sticks.
\end{lemma}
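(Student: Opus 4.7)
The plan is to associate each bigon of $D$ with a distinct vertex of the polygonal representative on its boundary; since the polygonal link is closed, the number of vertices equals the number of sticks, yielding $s\ge b$.

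The core observation is a turning-angle computation for a single bigon $f$. After putting the polygonal representative in general position so that no link vertex projects onto a crossing, the boundary of $f$ is a simple closed polygonal curve with corners at the two crossings $A,B$ and at the ``interior corners'' $v_1,\dots,v_k$ produced by link vertices on the two bounding arcs. Let $\alpha,\beta$ and $\gamma_1,\dots,\gamma_k$ be the interior angles of $f$ at these corners. Because the four angles at a transverse double point come in pairs $\theta,\pi-\theta,\theta,\pi-\theta$ with $\theta\in(0,\pi)$, one has $\alpha,\beta\in(0,\pi)$. The angle-sum formula for a simple $(k+2)$-gon gives
\[
\sum_{i=1}^{k}\gamma_i \;=\; k\pi - \alpha - \beta \;<\; k\pi,
\]
so at least one $\gamma_i<\pi$. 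In other words, every bigon contains at least one interior corner at which it sits on the convex side.

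The global assignment then follows: send each interior corner $v$ to the unique one of its two adjacent faces whose interior angle at $v$ is less than $\pi$ (the two such angles sum to $2\pi$, so at most one is strictly smaller than $\pi$; I would ignore or perturb away the non-generic case where both equal $\pi$). By the previous paragraph every bigon receives at least one assigned corner, and by construction no corner is assigned to more than one face, hence to more than one bigon. Consequently the number of link vertices, which equals the stick number of the representative, is at least $b$.

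The main obstacle is the turning step, and specifically the strict inequality $\sum\gamma_i<k\pi$. The argument would fail without the crossings contributing positively to the angle sum; transversality of the projected strands at a crossing is what saves us. Degenerate configurations (a link vertex projecting onto a crossing, or a vertex with collinear sticks) are non-generic and absorbed by a small perturbation that preserves the planar-isotopy class of the projection.
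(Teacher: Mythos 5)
Your argument is correct, and it takes a genuinely different---and in one respect more complete---route than the paper's. The paper's proof is a two-line observation: a bigon has two boundary arcs meeting at two transverse crossings, two straight segments cannot bound such a region, so straightening each bigon's boundary in the isotopy forces at least one vertex; the count $s\ge b$ is then asserted. Your proof replaces this with the quantitative angle-sum identity $\sum_{i}\gamma_i = k\pi-\alpha-\beta < k\pi$, which simultaneously recovers $k\ge 1$ (the case $k=0$ would force $\alpha+\beta=0$, impossible at transverse crossings) and, more importantly, shows that some interior corner of the bigon is strictly convex, $\gamma_i<\pi$. That extra strength is what powers your injective assignment of bigons to vertices: the two face-angles at a projected link vertex sum to $2\pi$, so at most one adjacent face can claim that vertex. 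This disjointness step is precisely what the paper's proof leaves implicit---a vertex lying on an edge shared by two bigons could a priori serve as the ``introduced vertex'' for both---so your version closes a small gap in addition to proving the statement. The price is the need for genericity (no link vertex projecting onto a crossing, no collinear projected sticks at a vertex), which you correctly dispose of by a small perturbation that preserves both the planar isotopy class of the projection and the number of sticks.
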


\noindent
\textit{Proof.}  Let \textbf{L} be a polygonal representative of $L$ which admits a projection $D_{\textbf{L}}$ isotopic to $D$.  
Since every bigon of $D$ has two boundary arcs, when straightening the arcs in the isotopy to $D_{\textbf{L}}$ at least one vertex must be introduced.  
Thus \textbf{L} must have at least as many vertices, and hence edges, as bigons in the projection $D$. $\Box$

The next lemma analyzes minimal crossing projections of 2-bridge links to obtain a lower bound on the number of bigons in any minimal crossing projection.

\begin{lemma}\label{lemma:RatlLinkBigons}
Let $L$ be a rational link given by the integers $c_1, c_2, \dots,c_n$, with each $c_i \ge 2$.  
Then any minimal crossing diagram of $L$ has at least $c(L) -2n + 2$ bigons.
\end{lemma}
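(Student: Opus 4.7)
The plan is to identify the bigons of any minimum crossing projection with the twist-region structure dictated by the continued fraction $[c_1, \ldots, c_n]$, and then to dispatch the cases $n \geq 2$ and $n = 1$ separately.

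First I would note that since $L$ is an alternating link, the resolution of the Tait conjectures by Kauffman, Murasugi, and Thistlethwaite implies that any minimum crossing projection $D$ of $L$ is reduced alternating. For a 2-bridge link given by $[c_1, \ldots, c_n]$ with every $c_i \geq 2$, the continued fraction data is a complete invariant up to reversal, and the Tait flyping theorem (Menasco--Thistlethwaite) tells us that any two reduced alternating diagrams of $L$ are related by flypes. Since flypes preserve the sizes of maximal twist regions, any such $D$ contains exactly $n$ maximal twist regions of sizes $c_1, \ldots, c_n$.

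Next I would count the bigons contributed by the twist regions. Within a maximal twist region of size $c_i$, consecutive crossings are separated by a bigon, yielding $c_i - 1$ bigons per region. Summing gives at least $\sum_{i=1}^n (c_i - 1) = c(L) - n$ bigons in $D$. For $n \geq 2$ this already exceeds $c(L) - 2n + 2$, so the lemma follows. For the remaining case $n = 1$, the link $L$ is a $(2, c_1)$-torus link, whose reduced alternating diagram is essentially the cyclic $c_1$-fold symmetric projection, and a direct enumeration shows it contains exactly $c_1 = c(L)$ bigons, matching the bound $c(L) - 2(1) + 2 = c(L)$.

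The main obstacle is the structural first step: arguing that every minimum crossing projection of $L$ has the same twist-region sizes as the standard Conway form. Once this flyping-based invariance is in hand, the bigon count is a direct enumeration from the continued fraction data, and the inequality is immediate arithmetic. An alternative approach would bypass flyping by an Euler-characteristic argument on the standard 4-plat diagram, showing that the number of non-bigon faces is at most $2n$; this is attractive but requires a careful case analysis of the junctions between consecutive tangles, whereas the flyping argument reduces the geometry to a clean enumeration.
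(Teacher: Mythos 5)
Your argument has a genuine gap at its ``structural first step'': the claim that flypes preserve the sizes of maximal twist regions is false, and this is exactly the phenomenon the lemma has to contend with. A flype based at a crossing of an intermediate tangle transports that crossing past the flype tangle to a non-adjacent site in the diagram; iterating, the $c_i$ crossings of the $i$-th twist region can be distributed between two separate twist regions of sizes $c_i'$ and $c_i''$ with $c_i' + c_i'' = c_i$. Such a split destroys a bigon: the two resulting regions contribute $(c_i'-1)+(c_i''-1) = c_i - 2$ bigons rather than $c_i - 1$. Consequently there exist reduced alternating (hence minimal crossing) diagrams of $L$ with only $c(L) - 2n + 2$ bigons, which for $n \ge 3$ is strictly less than the $c(L) - n$ you assert every minimal diagram must have. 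Your intermediate claim is therefore not merely unproved but false, even though the final inequality you are aiming for happens to be weak enough to survive in the statement of the lemma.

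The paper's proof confronts this directly: it classifies the nontrivial flypes in the standard diagram, showing that the first and last tangles admit none (so they are forced to contribute $c_1 - 1$ and $c_n - 1$ bigons), while each intermediate tangle admits a unique nontrivial flype (so its crossings can be split between at most two sites, contributing at least $c_i - 2$ bigons, and no fewer since the flype is unique). Summing gives the bound $c_1 + c_n - 2 + \sum_{1<i<n}(c_i - 2) = c(L) - 2n + 2$. To repair your argument you would need to replace the invariance claim with this flype-orbit analysis; the bigon count within a single twist region ($c_i - 1$ bigons for $c_i$ consecutive crossings) and the closing arithmetic are fine as written.
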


\noindent
\textit{Proof}.  The Tait Flyping conjecture, proven by Menasco and Thistlethwaite in 1987, 
states that any alternating diagram of $L$ can be obtained from the standard one by flypes \cite[Theorem 1]{mt}.  
Since $c_i \ge 2$, all bigons in the standard diagram occur in the integral tangles, and there are $c_i - 1$ bigons in each. (If either $c_1$ or $c_n$ is one, 
it forms a bigon with the adjacent tangle.)  Flypes can reduce the number of bigons by distributing the crossings of an integral tangle throughout its flype orbit.  
To obtain the lower bound, then, we analyze the flype orbits in rational links.

\begin{figure}[h]
 \centering
 \includegraphics[width=5in]{./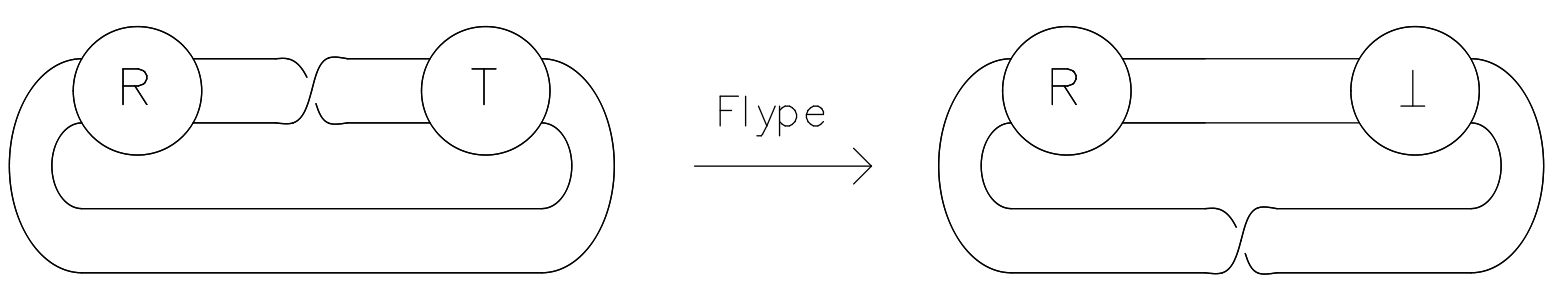}
 \caption{A Flype} \label{100}
\end{figure}

To specify a non-trivial flype for a crossing in a knot diagram, the complement of the crossing must be decomposed into two non-trivial 4-tangles as in Figure~\ref{100}.  
Since all crossings in an integral tangle admit the same flypes, 
we will speak of tangles as admitting flypes.  We now classify the non-trivial flypes in the standard diagram of a 2-bridge link (see \cite{ca} for more details regarding flype orbits).

\begin{figure}[h]
 \centering
 \includegraphics[width=4in]{./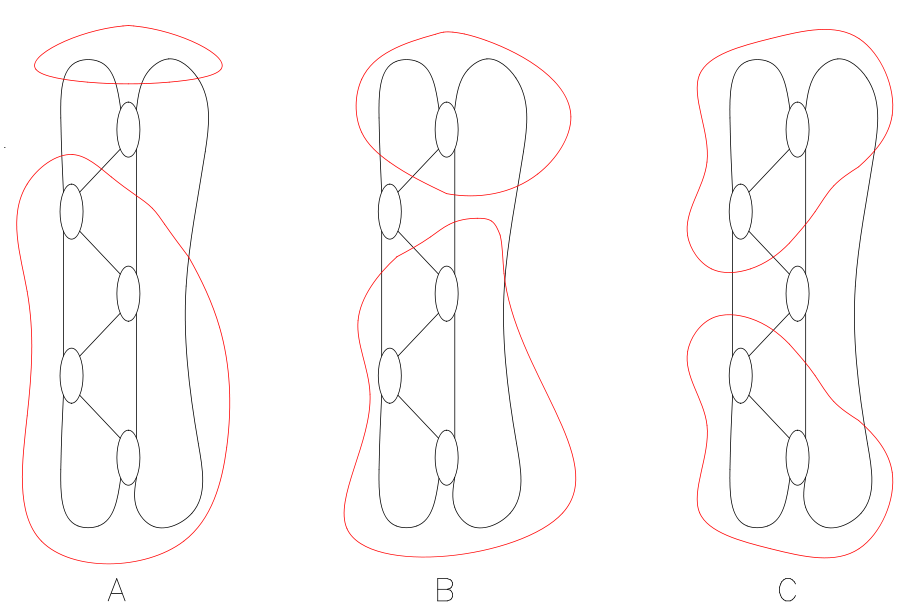}
\caption{Flype Orbits} \label{101}
\end{figure}

The first and last tangles do not admit non-trivial flypes because in the only decomposition of the complement into 4-tangles one of them is trivial (see Figure~\ref{101}a).  
Thus these tangles contribute $c_1 -1$ and $c_n -1$ bigons to any alternating diagram of $L$.  Each intermediate tangle admits a unique non-trivial flype as in Figure~\ref{101}b and \ref{101}c.  
Thus for $1 < i < n$ the $c_i$ crossings can be distributed to two tangles with $c_i '$ and $c_i ''$ crossings respectively. 
 The two tangles then contribute $c_i' -1$ and $c_i '' -1$ bigons respectively, for a total of $c_i ' - 1 + c_i'' - 1 = c_i -2$ bigons.  
(See Figure~\ref{102} for a diagram with distributed crossings.)  
This is the most that crossings can be distributed, since non-trivial flypes are unique, giving a lower bound for the number of bigons.  
Summing we see that the fewest number of bigons in any alternating (hence minimal crossing) diagram of $L$ is

\[
c_1 + c_n -2 + \sum_{1<i<n}\left(c_i - 2\right) = \sum_{i=1}^n c_i - 2 -2(n-2) = c(L) -2n +2.\ \Box
\]

\begin{figure}[h]
 \centering
 \includegraphics[width=5in]{./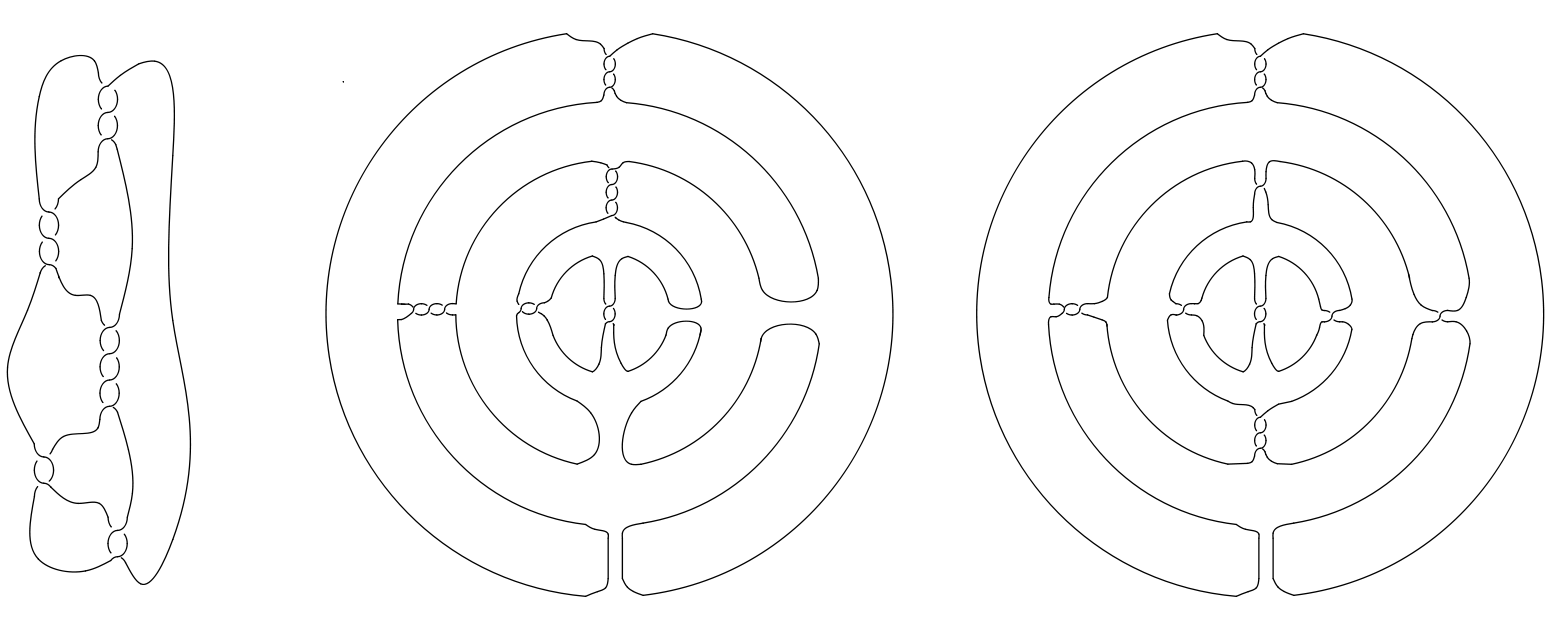}
\caption{Different Projections of a 2-Bridge Knot Suggesting a Flype Orbit} \label{102}
\end{figure}

Combining Lemmas \ref{lemma:BigonsAndSticks} and \ref{lemma:RatlLinkBigons} yields the following result.

\begin{thm} \label{thm:minimalcrossing}
Let $L$ be a rational link given by the integers $c_1, c_2, \dots,c_n$, with each $c_i \ge 2$.  Further, suppose $P$ of the integers satisfy 
$c_i \equiv 0 \pmod 3$, $Q$ satisfy $c_i \equiv 1 \pmod 3$, and $R$ satisfy $c_i \equiv 2\pmod 3$.  Minimal stick representatives of $L$ do not admit minimal crossing projections whenever

\[
12n + 3 -2Q-R < c(L).
\]
\end{thm}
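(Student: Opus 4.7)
The plan is to combine the upper bound on stick number from Theorem \ref{thm:2BridgeUpperBound} with a lower bound on the number of sticks needed for any polygonal representative admitting a minimal crossing projection. The lower bound is the composition of Lemmas \ref{lemma:BigonsAndSticks} and \ref{lemma:RatlLinkBigons}: any minimal crossing diagram of $L$ contains at least $c(L)-2n+2$ bigons, and any polygonal representative admitting such a projection requires at least one stick per bigon. Hence if a minimal stick representative of $L$ admits a minimal crossing projection, we must have $s(L)\ge c(L)-2n+2$.

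On the other hand, Theorem \ref{thm:2BridgeUpperBound} provides the upper bound
\[
s(L) \le \frac{2}{3}c(L) + 2n + 3 - \frac{2}{3}Q - \frac{1}{3}R.
\]
Combining the two bounds forces
\[
c(L) - 2n + 2 \le \frac{2}{3}c(L) + 2n + 3 - \frac{2}{3}Q - \frac{1}{3}R,
\]
and clearing denominators and rearranging yields $c(L) \le 12n + 3 - 2Q - R$. Reading this contrapositively, whenever $12n + 3 - 2Q - R < c(L)$ no minimal stick representative of $L$ can admit a minimal crossing projection, which is exactly the conclusion.

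There is essentially no obstacle once the three preceding results are in hand, and the argument is really just a direct algebraic comparison of two previously established bounds. The one point worth checking before writing up the proof is that Lemma \ref{lemma:BigonsAndSticks} applies: a minimal stick representative is polygonal, and the hypothesis that it \emph{admits a minimal crossing projection} means that its planar projection can be isotoped to a minimal crossing diagram of $L$, which is precisely the hypothesis of Lemma \ref{lemma:BigonsAndSticks}. With that observation noted, the argument is a one-line contrapositive.
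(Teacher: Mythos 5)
Your proposal is correct and follows essentially the same route as the paper: combine the bigon count of Lemma \ref{lemma:RatlLinkBigons} with Lemma \ref{lemma:BigonsAndSticks} to get the necessary condition $s(L) \ge c(L) - 2n + 2$, compare against the upper bound of Theorem \ref{thm:2BridgeUpperBound}, and solve for $c(L)$. The only difference is cosmetic (you phrase it as a contrapositive, the paper as a direct inequality), and your algebra checks out.
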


\noindent
\begin{proof} By Lemma \ref{lemma:RatlLinkBigons}, there are at least $c(L) - 2n + 2$ bigons in any minimal crossing projection of a rational link $L$.  
By Lemma \ref{lemma:BigonsAndSticks}, if the relationship $s(L) < c(L) -2n+2$ is satisfied then minimal stick representatives of $L$ do not admit alternating projections.  
Setting the upper bound of Theorem \ref{thm:2BridgeUpperBound} less than $c(L) -2n+2$ gives a range for which minimal stick representatives do not admit minimal crossing projections.  We have

\[
s(L) \le \frac{2}{3}c(L) + 2n + 3 - \frac{2}{3}Q - \frac{1}{3}R < c(L) - 2n +2.
\]

\noindent
Solving the right hand inequality for $c(L)$ completes the proof. \end{proof}

Loosely speaking, then, minimal stick representatives of rational links do not admit minimal crossing projections as long as there are at least twelve times as many crossings as tangles.  
In the extreme case when every $c_i$ is a multiple of three, i.e. when $P = n$, the bound reduces to $12n + 3 < c(L)$.  The opposite extreme, when $Q=n$, yields the bound $10n+3 < c(L)$.
 
\begin{figure}[h]
\begin{center}
 \includegraphics[width=4in]{./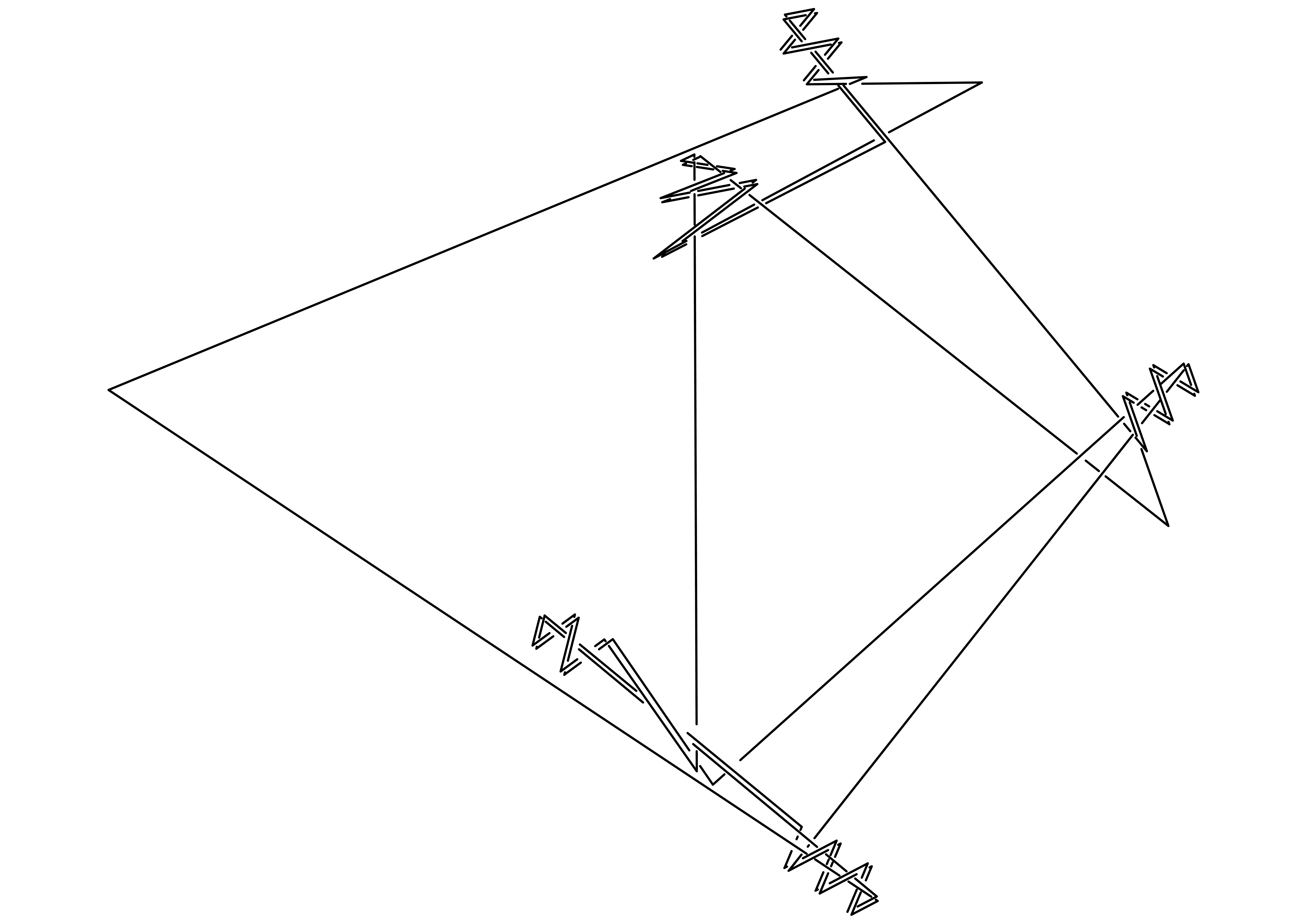}
\end{center}
\caption{A polygonal 2-Bridge link with no minimal crossing projection}
\label{fig:5Tangles}
\end{figure}

In the example pictured in Figure~\ref{fig:5Tangles}, each of the five tangles has $16$ crossings.
Hence $n=Q =5$ and the crossing number is $c(L) = 80$. The stick number is $s(L) \le 63$ and the minimum number of bigons in any minimal crossing projection is $72$.
Thus, no minimal stick representation of this link can yield a minimal crossing projection.

\noindent
\textbf{Acknowledgements}

This research was supported in part by NSF-REU Grants DMS-0453605 and DMS-1156608, as well as California State University, San Bernardino.

\end{document}